\newcommand{\refeqn}[1] {Eq.~(\ref{#1})}
\newcommand{\reffig}[1] {Fig.~(\ref{#1})}
\newtheorem{proposition}{Proposition}
\renewcommand{\vec}{\boldsymbol}
\begin{document}


\title{\bf{Lie Symmetry Analysis for Cosserat Rods}}

\author{
\small{\bf{Dominik~L.~Michels}}\\
\small{High Fidelity Algorithmics Group, Computer Science Department, Stanford University, 353 Serra Mall, MC 9515, Stanford, CA 94305}\\
\small{\texttt{michels@cs.stanford.edu}}

\and

\small{\bf{Dmitry~A.~Lyakhov}}\\
\small{Radiation Gaseous Dynamics Lab, A.~V.~Luikov Heat and Mass Transfer Institute, National Academy of Sciences of Belarus, P.~Brovka St 15, 220072 Minsk, Belarus}\\
\small{\texttt{lyakhovda@bsu.by}}

\and

\small{\bf{Vladimir~P.~Gerdt}}\\
\small{Group of Algebraic and Quantum Computations, Joint Institute for Nuclear Research, Joliot-Curie 6, 141980 Dubna, Moscow Region, Russia}\\
\small{\texttt{gerdt@jinr.ru}}

\and

\small{\bf{Gerrit~A.~Sobottka}}, \small{\bf{Andreas~G.~Weber}}\\
\small{Multimedia, Simulation and Virtual Reality Group, Institute of Computer Science II, University of Bonn, Friedrich-Ebert-Allee 144, 53113 Bonn, Germany}\\
\small{\texttt{\{sobottka,weber\}@cs.uni-bonn.de}}

}

\date{}



\maketitle

\begin{abstract}
We consider a subsystem of the Special Cosserat Theory of Rods and construct an explicit form of its solution that depends on three arbitrary functions in $(s,t)$ and three arbitrary functions in $t$. Assuming analyticity of the arbitrary functions in a domain under consideration, we prove that the obtained solution is analytic and general.
The Special Cosserat Theory of Rods describes the dynamic equilibrium of 1-dimensional continua, i.e. slender structures like fibers, by means of a system of partial differential equations.
\end{abstract}

\section{Introduction}
The Lie symmetry analysis of differential equations has become a powerful and universal approach to obtain group-invariant solutions and to perform their classification (cf.~\cite{Olivery10} and references  therein). Sophus Lie himself considered groups of point and contact transformations to integrate systems of partial differential equations (PDEs). His key idea was to obtain first infinitesimal generators of one-parameter symmetry subgroups and then to construct the full symmetry group. The study of symmetries of differential equations allows one to gain insight into the structure of the problem they describe. In particular, the existence of Lie symmetries means that one can find a decomposition of the differential equation system into a transformed system of reduced order and a set of integrators that in turn can be applied to develop more efficient numerical integration schemes for the governing differential equations.

In our contribution we focus on an equation subsystem of the Special Cosserat Theory of Rods (cf. \cite{Antman95}), a system of coupled partial differential equations, that govern the spatiotemporal evolution of the physical process of deformation of an one-dimensional continuum, the Cosserat rod (e.g. a fiber). In paper~\cite{ZWF07} the Lie symmetry analysis was applied to study symmetric properties of DNA modelled as a super-long elastic round rod. As a result, nontrivial infinitesimal symmetries of the dynamical Hamiltonian equations of the rod were detected and the related conserved quantities were derived. We consider here another model of the rod and apply the Lie symmetries to the subsystem of the governing system of partial differential equations. With assistance of computer algebra the Lie symmetry approach allowed us to construct a closed form of general analytical solution to the subsystem under consideration. Our motivation to do this research is based on the fact that the deformation modes of a Cosserat rod like bending, twisting, shearing, and extension typically evolve on different time scales which renders the problem inherently stiff (cf.~\cite{CurtissHirschfelder52}) and demands for appropriate methods for the numerical treatment of the governing PDE system. Knowledge of its structural properties can directly lead to more efficient solution methods.

\subsection{Specific Contributions}
We use computer algebra systems (specifically MAPLE, which provides sophisticated packages for the analysis of Lie symmetries in PDEs) in order to find Lie symmetries for proper systems and define the conditions under which they exist. In this regard our specific contributions are as follows.
\begin{itemize}
 \item We study a subsystem of the Special Cosserat Theory of Rods (cf. \cite{Antman95}) of the form $$\partial_t\vec{\kappa}(s,t)=\partial_s\vec{\omega}(s,t) + \vec{\omega}(s,t) \times \vec{\kappa}(s,t),$$by performing a Lie group analysis.
 \item We construct an explicit form of the solution of the subsystem that depends on three arbitrary functions in $(s,t)$ and three arbitrary functions in $t$.
 \item We prove that the obtained solution is analytic and general.
\end{itemize}


\section{Special Cosserat Theory of Rods}
In this section we give a recap of the Special Cosserat Theory of Rods.
Fibers can approximately be considered as one-dimensional continua that undergo bending, twisting, shearing, and longitudinal dilation deformation.
Following \cite{Antman95}, we consider the Euclidian 3-space $\mathbb{E}^3$ to be the abtract 3-dimensional inner product space. Its elements are denoted by lower-case, boldface, italic symbols. Let $\mathbb{R}^3$ be the set of triples of real numbers. Its elements are denoted by lower-case, boldface, sans-serif letters.

The motion of a special Cosserat Rod is given by
\begin{equation}
\label{eqn:ConfigurationCosseratRod}
(s, t)\mapsto\left(\boldsymbol{r}(s,t),\boldsymbol{d}_1(s,t),\boldsymbol{d}_2(s,t)\right),
\end{equation}
where $\boldsymbol{r}(s,t)$ is the centerline of the rod. It is furnished with a set of so-called orthonormal directors $\{\boldsymbol{d}_1(s,t), \boldsymbol{d}_2(s,t), \boldsymbol{d}_3(s,t)\}$. $\{\boldsymbol{d}_k\}$ is a right-handed orthonormal basis in $\mathbb{E}^3$, with $\boldsymbol{d}_3:=\boldsymbol{d}_1\times\boldsymbol{d}_2$. The directors $\boldsymbol{d}_1$ and $\boldsymbol{d}_2$ span the cross-section plane, see \reffig{fig:fiberCurveTriad}. The deformation of the rod is obtained if its motion defined by \refeqn{eqn:ConfigurationCosseratRod} is related to some reference configuration $$\{\boldsymbol{r}^{\circ}(s,t),\boldsymbol{d}^{\circ}_1(s,t),\boldsymbol{d}^{\circ}_2(s,t)\}.$$

\contourlength{1.6pt}
\newcommand{\renderScale}{0.15}
\newcommand{\outlineScale}{1.25*\renderScale}
\newcommand{\pxToMetric}{0.0352777778}
\newcommand{\horizontalCrop}{0.3}
\newcommand{\imageWidth}{1920}
\newcommand{\imageHeight}{1080}

\begin{figure}
\centering
\begin{tikzpicture}
\begin{scope}[scale=10*\pxToMetric*\renderScale]
\coordinate (d1) at (609.500mm, 393.654mm);
\coordinate (d2) at (451.588mm, 30.955mm);
\coordinate (d3) at (839.662mm, 92.118mm);
\coordinate (o1) at (631.054mm, 174.816mm);
\coordinate (ax) at (887.632mm, 543.492mm);
\coordinate (ay) at (1369.466mm, 541.844mm);
\coordinate (az) at (1126.066mm, 941.984mm);
\coordinate (o2) at (1118.542mm, 639.549mm);
\coordinate (r) at (1495.859mm, 221.154mm);
\coordinate (s0) at (608.083mm, 533.215mm);
\coordinate (sl) at (1832.694mm, 552.899mm);
\end{scope}

\tikzset{
  double -latex/.style args={#1 colored by #2 and #3}{
    -latex,line width=#1,#2,
    postaction={draw,-latex,#3,line width=(#1)/3,shorten <=(#1)/4,shorten >=4.5*(#1)/3},
  },
  double round cap-latex/.style args={#1 colored by #2 and #3}{
    round cap-latex,line width=#1,#2,
    postaction={draw,round cap-latex,#3,line width=(#1)/3,shorten <=(#1)/4,shorten >=4.5*(#1)/3},
  },
  double round cap-stealth/.style args={#1 colored by #2 and #3}{
    round cap-stealth,line width=#1,#2,
    postaction={round cap-stealth,draw,,#3,line width=(#1)/3,shorten <=(#1)/3,shorten >=2*(#1)/3},
  },
  double -stealth/.style args={#1 colored by #2 and #3}{
    -stealth,line width=#1,#2,
    postaction={-stealth,draw,,#3,line width=(#1)/3,shorten <=(#1)/3,shorten >=2*(#1)/3},
  },
  double -triangle 45/.style args={#1 colored by #2 and #3}{
    -triangle 45,line width=#1,#2,
    postaction={-triangle 45,draw,,#3,line width=(#1)/3,shorten <=(#1)/3,shorten >=3.5*(#1)/2},
  },
}

\path[use as bounding box] (\horizontalCrop, 0) rectangle (\renderScale*\pxToMetric*\imageWidth - \horizontalCrop, \renderScale*\pxToMetric*\imageHeight);

\node[scale=\outlineScale, anchor=south west, inner sep=0pt] at (0, 0) {\includegraphics{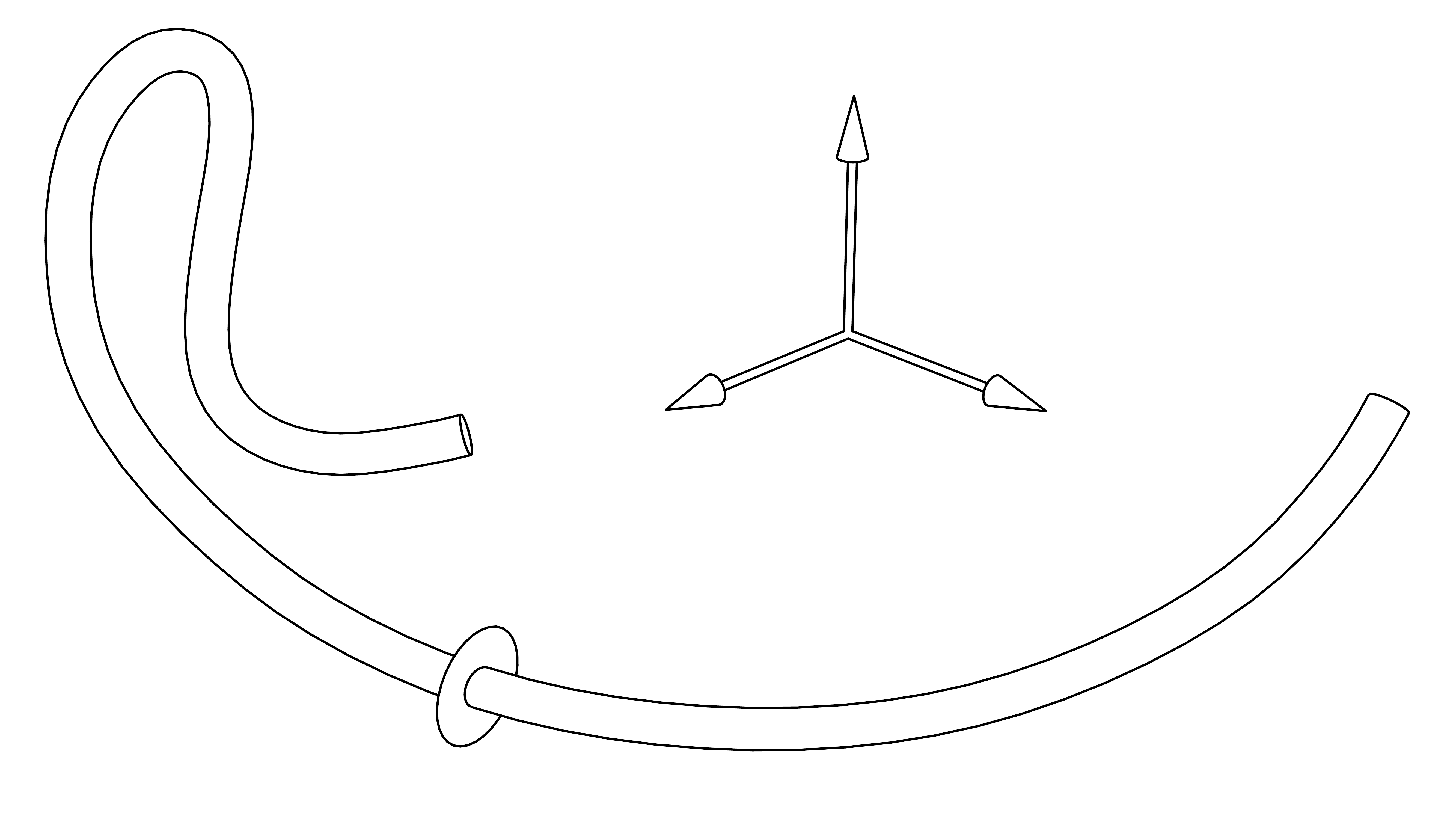}};

\draw[double -latex=2pt colored by black and white] (o2) -- (r) node[black, pos=0.65, right, inner sep=5pt, xshift=0pt, yshift=0pt] {\contour{white}{$\boldsymbol{r}(s, t)$}};

\draw[white, thick, fill=black] (o2) circle (0.1);

\draw[double -latex=2pt colored by black and white] (o1) -- (d1) node[black, anchor=west, inner sep=2pt, xshift=0pt, yshift=0pt] {\contour{white}{$\boldsymbol{d_1}$}};
\draw[double -latex=2pt colored by black and white] (o1) -- (d2) node[black, anchor=south east, inner sep=0pt, xshift=0pt, yshift=0pt] {\contour{white}{$\boldsymbol{d_2}$}};
\draw[double -latex=2pt colored by black and white] (o1) -- (d3) node[black, anchor=west, inner sep=0pt, xshift=0pt, yshift=0pt] {\contour{white}{$\boldsymbol{d_3}$}};

\node [black, anchor=north east, inner sep=2pt, xshift=0pt, yshift=0pt] at (ax) {\contour{white}{$\boldsymbol{x}$}};
\node [black, anchor=north west, inner sep=2pt, xshift=0pt, yshift=0pt] at (ay) {\contour{white}{$\boldsymbol{y}$}};
\node [black, anchor=south, inner sep=4pt, xshift=0pt, yshift=0pt] at (az) {\contour{white}{$\boldsymbol{z}$}};

\node [black, anchor=south, inner sep=3pt, xshift=0pt, yshift=0pt] at (s0) {\contour{white}{$s = 0$}};
\node [black, anchor=south, inner sep=3pt, xshift=0pt, yshift=0pt] at (sl) {\contour{white}{$s = l$}};

\end{tikzpicture}
\caption{The vector set $\{\vec{d}_k\}$ forms a right-handed orthonormal basis at each point of the centerline. The directors $\vec{d}_1$ and $\vec{d}_2$ span the local material cross-section, whereas $\vec{d}_3$ is perpendicular to the material cross-section. Note that in the presence of shear deformations  $\vec{d}_3$ is not tangent to the centerline of the fiber.}
\label{fig:fiberCurveTriad}
\end{figure}

Further, there exist vector-valued functions $\boldsymbol{\kappa}$ and $\boldsymbol{\omega}$ such that the directors evolve according to the kinematic relations
\begin{eqnarray*}
 \partial_s \boldsymbol{d}_k &=& \boldsymbol{\kappa}\times\boldsymbol{d}_k,\\
 \partial_t \boldsymbol{d}_k &=& \boldsymbol{\omega}\times\boldsymbol{d}_k,
\end{eqnarray*}
where $\boldsymbol{\kappa}$ is the Darboux and $\boldsymbol{\omega}$ the twist vector.
Their components are given with respect to the orthonormal basis, i.e.
$\boldsymbol{\kappa}=\sum_{k=1}^3\kappa_k\boldsymbol{d}_k$ and $\boldsymbol{\omega}=\sum_{k=1}^3\omega_k\boldsymbol{d}_k$.

The linear strains of the rod are given by $\boldsymbol{\nu}=\sum_{k=1}^3\nu_k\boldsymbol{d}_k=\partial_s\boldsymbol{r}$ and the velocity of a cross-section material plane by $\boldsymbol{\upsilon}=\partial_t\boldsymbol{r}$. The triples $(\kappa_1, \kappa_2, \kappa_3)$, $(\omega_1, \omega_2, \omega_3)$, $(\nu_1, \nu_2, \nu_3)$, and $(\upsilon_1, \upsilon_2, \upsilon_3)$ are denoted by $\boldsymbol{\mathsf{\kappa}}$, $\boldsymbol{\mathsf{\omega}}$, $\boldsymbol{\mathsf{\nu}}$, and $\boldsymbol{\mathsf{\upsilon}}$ respectively. In particular, $\boldsymbol{\mathsf{\kappa}}:=(\kappa_1, \kappa_2, \kappa_3)$ and $\boldsymbol{\mathsf{\nu}}:=(\nu_1, \nu_2, \nu_3)$ are the strain variables that uniquely determine the motion of the rod described by \refeqn{eqn:ConfigurationCosseratRod} at every instant in time $t$ (except for a rigid body motion). Their components have a physical meaning: they describe the bending of the rod with respect to the two major axes of the cross section ($\kappa_1$,$\kappa_2$), the torsion ($\kappa_3$), shear ($\nu_1$,$\nu_2$) and extension ($\nu_3$). Moreover, since $\partial_t\partial_s\boldsymbol{d}_k=\partial_s\partial_t\boldsymbol{d}_k$ we obtain the compatibility equation
\begin{equation*}
 \partial_t\boldsymbol{\kappa}=\partial_s\boldsymbol{\omega}+\boldsymbol{\omega}\times\boldsymbol{\kappa}.
\end{equation*}
In the same sense we have
\begin{equation*}
 \partial_{t}\boldsymbol{\nu}=\partial_{s}\boldsymbol{\upsilon}.
\end{equation*}

\subsection{Equations of Motion}
The equations of motion for the rod read
\begin{eqnarray*}
 \partial_s\boldsymbol{n}+\boldsymbol{f} &=& \rho A\partial_t{\boldsymbol{\upsilon}}+
  \rho\left(I_1\partial_{tt}\boldsymbol{d}_1+I_2\partial_{tt}\boldsymbol{d}_2\right),\\
 \partial_s\boldsymbol{m}+\boldsymbol{\nu}\times\boldsymbol{n}+\boldsymbol{l} &=&
  \rho\left(I_1\boldsymbol{d}_1+I_2\boldsymbol{d}_2\right)\times\partial_t{\boldsymbol{\upsilon}}+
   \partial_t\left(\rho\boldsymbol{J}\boldsymbol{\omega}\right),
\end{eqnarray*}
where $\boldsymbol{n}=\sum_{k=1}^3n_k\boldsymbol{d}_k$ and $\boldsymbol{m}=\sum_{k=1}^3m_k\boldsymbol{d}_k$ are the internal stresses and $\boldsymbol{f}$ and $\boldsymbol{l}$ are the external forces and torques acting on the rod, $\varrho$ the linear density. $I_1$ and $I_2$ are the first mass moments of inertia of cross section per unit length and $\boldsymbol{J}$ is the inertia tensor of cross section per unit length. Further, we define $\boldsymbol{\mathsf{n}}:=(n_1,n_2,n_3)$ and $\boldsymbol{\mathsf{m}}:=(m_1,m_2,m_3)$. The shear forces are given by $n_1$ and $n_2$, the tension by $\boldsymbol{n}\cdot\boldsymbol{\nu}/\left\Vert\boldsymbol{\nu}\right\Vert$, bending moments by $m_1$ and $m_2$, and the twisting moment by $m_3$.

\subsection{Constitutive Relations}
In order to relate the internal stresses $\boldsymbol{n}$ and $\boldsymbol{m}$ to the kinematic quantities $\boldsymbol{\nu}$ and $\boldsymbol{\kappa}$ we introduce constitutive equations of the form
\begin{eqnarray*}
  \boldsymbol{\mathsf{n}}(s,t)&=&\hat{\boldsymbol{\mathsf{n}}}\left(
   \boldsymbol{\mathsf{\kappa}}(s, t),\boldsymbol{\mathsf{\nu}}(s,t), s
    \right),\\
  \boldsymbol{\mathsf{m}}(s,t)&=&\hat{\boldsymbol{\mathsf{m}}}\left(
   \boldsymbol{\mathsf{\kappa}}(s, t),\boldsymbol{\mathsf{\nu}}(s,t), s
    \right).
\end{eqnarray*}
For fixed $s$, the common domain $\mathcal{V}(s)$ of these constitutive functions is a subset of $(\boldsymbol{\mathsf{\kappa}},\boldsymbol{\mathsf{\nu}})$ describing orientation preserving deformations. $\mathcal{V}(s)$ consists at least of all $(\boldsymbol{\mathsf{\kappa}},\boldsymbol{\mathsf{\nu}})$ that satisfy $\nu_3=\boldsymbol{\nu}\cdot\boldsymbol{d}_3>0$.

The rod is called hyper-elastic, if there exists a strain-energy-function $W:\{(\boldsymbol{\mathsf{\kappa}},\boldsymbol{\mathsf{\nu}}\in\mathcal{V})\}\rightarrow\mathbb{R}$ such that
\begin{eqnarray*}
 \hat{\boldsymbol{\mathsf{n}}}\left(
   \boldsymbol{\mathsf{\kappa}},\boldsymbol{\mathsf{\nu}}, s
    \right) &=& \partial W\left(
   \boldsymbol{\mathsf{\kappa}},\boldsymbol{\mathsf{\nu}}, s
    \right)/\partial\boldsymbol{\mathsf{\nu}},\\
 \hat{\boldsymbol{\mathsf{m}}}\left(
   \boldsymbol{\mathsf{\kappa}},\boldsymbol{\mathsf{\nu}}, s
    \right) &=& \partial W\left(
   \boldsymbol{\mathsf{\kappa}},\boldsymbol{\mathsf{\nu}}, s
    \right)/\partial\boldsymbol{\mathsf{\kappa}}.
\end{eqnarray*}

The rod is called viscoelastic of strain-rate type of complexity 1 if there exist functions such that
\begin{eqnarray}
 \label{eqn:ConstitutiveViscoElastic1}
  \boldsymbol{\mathsf{n}}(s,t)&=&\hat{\boldsymbol{\mathsf{n}}}\left(
   \boldsymbol{\mathsf{\kappa}}(s, t),\boldsymbol{\mathsf{\nu}}(s,t),
   \partial_t\boldsymbol{\mathsf{\kappa}}(s, t),\partial_t\boldsymbol{\mathsf{\nu}}(s,t),s
    \right),\\
  \label{eqn:ConstitutiveViscoElastic2}
  \boldsymbol{\mathsf{m}}(s,t)&=&\hat{\boldsymbol{\mathsf{m}}}\left(
   \boldsymbol{\mathsf{\kappa}}(s, t),\boldsymbol{\mathsf{\nu}}(s,t),
   \partial_t\boldsymbol{\mathsf{\kappa}}(s, t),\partial_t\boldsymbol{\mathsf{\nu}}(s,t),s
    \right).
\end{eqnarray}
 For $\partial_t\boldsymbol{\mathsf{\kappa}}(s, t)=\boldsymbol{\mathsf{0}}$ and $\partial_t\boldsymbol{\mathsf{\nu}}(s, t)=\boldsymbol{\mathsf{0}}$, \refeqn{eqn:ConstitutiveViscoElastic1} and \refeqn{eqn:ConstitutiveViscoElastic2} become the so called equilibrium response functions and describe elastic behavior.

\subsection{Material Laws}
The constitutive laws for elastic material behavior become
\begin{equation*}
\hat{\boldsymbol{\mathsf{n}}}(s,t)=\left(G A \left(\nu_1-\nu_1^\circ \right), G A \left(\nu_2-\nu_2^\circ \right), E A \left(\nu_3-\nu_3^\circ \right)\right),
\end{equation*}
with the initial strain vector field $\vec{\nu}^\circ(s)$, Young's modulus $E$, cross-section area $A$, and
\begin{equation*}
\hat{\boldsymbol{\mathsf{m}}}(s,t)=\left(E_b I_1 \left(\kappa_1-\kappa_1^\circ \right), E_b I_2 \left(\kappa_2-\kappa_2^\circ \right), G I_3 \left(\kappa_3-\kappa_3^\circ \right)\right),
\end{equation*}
with the initial bending and torsion vector field $\vec{\kappa}^\circ(s)$, Young's modulus $E_b$ of bending, and shear modulus $G$. The area moments of inertia are again denoted by $I_1$ and $I_2$, the polar moment of inertia with $I_3$.

Since bending stiffnesses $E_bI_1$, $E_bI_2$, and torsional stiffness $G_bI_3$ change with the fourth power of the fiber diameter they are usually orders of magnitude smaller than the tensile stiffness $EA$ and shearing stiffness $GA$. This renders the problem of fiber simulation based on the Special Theory of Cosserat Rods inherently ``stiff''.

\subsubsection{Kirchhoff Rods} We allude to the fact that in the classical theory of Kirchhoff the rod can undergo neither shear nor extension. This is accommodated by setting the linear strains to $\boldsymbol{\mathsf{\nu}}:=(\nu_1,\nu_2,\nu_3)=(0, 0, 1)$, (local coordinates). Geometrically this means, that the angle between the director $\boldsymbol{d}_3$ and the tangent to the centerline, $\partial_s\boldsymbol{r}$, always remains zero (no shear) and that the tangent to the centerline always has unit length (no elongation).

\subsection{System of Governing Equations} The full system of partial differential equations governing the deformation of an elastic rod is thus given by the following first order system,
\begin{eqnarray}
  \label{eqn:KinematicEquation}
  \partial_t\boldsymbol{d}_k &=&
   \boldsymbol{\omega}\times\boldsymbol{d}_k,\\
  \label{eqn:cosserat1}
  \partial_t\boldsymbol{\kappa} &=&
   \partial_s\boldsymbol{\omega}+\boldsymbol{\omega}\times\boldsymbol{\kappa},\\
\nonumber
  \partial_t\boldsymbol{\nu} &=& \partial_s\boldsymbol{\upsilon},\\
\nonumber
  \partial_t\left(\rho\boldsymbol{J}\boldsymbol{\omega}\right) &=&
   \partial_s\left(\hat{m}_k(\boldsymbol{\mathsf{\kappa}},\boldsymbol{\mathsf{\nu}})\boldsymbol{d}_k\right) +
    \boldsymbol{\nu}\times\hat{n}_k(\boldsymbol{\mathsf{\kappa}},\boldsymbol{\mathsf{\nu}})\boldsymbol{d}_k,\\
\nonumber
  \rho A\partial_t\boldsymbol{\upsilon} &=&
   \partial_s\left(\hat{n}_k(\boldsymbol{\mathsf{\kappa}},\boldsymbol{\mathsf{\nu}})\boldsymbol{d}_k\right).
\end{eqnarray}

If $(\hat{\boldsymbol{\mathsf{n}}},\hat{\boldsymbol{\mathsf{m}}})$ satisfy the monotonicity condition, i.e. the matrix
\begin{equation*}
  \begin{bmatrix}
    \partial\hat{\boldsymbol{\mathsf{m}}}/\partial\boldsymbol{\mathsf{\kappa}} &
    \partial\hat{\boldsymbol{\mathsf{m}}}/\partial\boldsymbol{\mathsf{\nu}} \\
    \partial\hat{\boldsymbol{\mathsf{n}}}/\partial\boldsymbol{\mathsf{\kappa}} &
    \partial\hat{\boldsymbol{\mathsf{n}}}/\partial\boldsymbol{\mathsf{\nu}}
  \end{bmatrix},
\end{equation*}
is positive-definite, then this system is hyperbolic. It can be written in the form of a conservation law
\begin{equation*}
 \partial_t\Phi(\zeta)=\partial_s\Psi(\zeta)+\Theta(\zeta),
\end{equation*}
with $\zeta=(\boldsymbol{d}_k,\boldsymbol{\kappa},\boldsymbol{\nu},\boldsymbol{\omega}, \boldsymbol{\upsilon})$. This system can be decoupled from the Kinematic \refeqn{eqn:KinematicEquation} by decomposing it with respect to the basis $\{\boldsymbol{d}_k\}$ which yields
\begin{eqnarray}
\label{eqn:cosserat1b}
  \partial_t\boldsymbol{\mathsf{\kappa}} &=&
   \partial_s\boldsymbol{\mathsf{\omega}}+\boldsymbol{\mathsf{\omega}}\times\boldsymbol{\mathsf{\kappa}}, \label{maineq}\\
\nonumber
  \partial_t\boldsymbol{\mathsf{\nu}} &=& \partial_s\boldsymbol{\mathsf{\upsilon}}+
    \boldsymbol{\mathsf{\kappa}}\times\boldsymbol{\mathsf{\upsilon}} -
    \boldsymbol{\mathsf{\omega}}\times\boldsymbol{\mathsf{\nu}},\\
\nonumber
  \partial_t\left(\rho\boldsymbol{\mathsf{J}}\boldsymbol{\mathsf{\omega}}\right) &=&
   \partial_s\hat{\boldsymbol{\mathsf{m}}} +
   \boldsymbol{\mathsf{\kappa}}\times\hat{\boldsymbol{\mathsf{m}}} +
   \boldsymbol{\mathsf{\nu}}\times\hat{\boldsymbol{\mathsf{n}}} -
   \boldsymbol{\mathsf{\omega}}\times\left(\rho\boldsymbol{\mathsf{J}}\boldsymbol{\mathsf{\omega}}\right),\\
\label{eqn:LinearVelocity}
  \rho A\partial_t\boldsymbol{\mathsf{\upsilon}} &=&
   \partial_s\hat{\boldsymbol{\mathsf{n}}} +
   \boldsymbol{\mathsf{\kappa}}\times\hat{\boldsymbol{\mathsf{n}}} -
   \boldsymbol{\mathsf{\omega}}\times\left(\rho A\boldsymbol{\mathsf{\upsilon}}\right).
\end{eqnarray}
If external forces (e.g. gravity) are to be considered as well, they have to be added to the right-hand side of \refeqn{eqn:LinearVelocity} after transforming them into the local basis. For this purpose the Kinematic \refeqn{eqn:KinematicEquation} has to be solved additionally.

\section{Lie Symmetry Analysis}

\subsection{Problem Setting}
As a first step in the direction of a better understanding of the Cosserat rod we begin our study with a subsystem of the Special Cosserat Theory of Rods. Precisely, we apply the classical symmetry method invented by Sophus Lie (cf.~\cite{Olivery10}) to the equation system given by \refeqn{maineq},
\begin{equation}\label{veq}
\vec{F}=\vec{0},\quad \vec{F}:=\partial_s \vec{\omega} - \partial_t \vec{\kappa} +  \vec{\omega} \times \vec{\kappa},
\end{equation}
which corresponds to \refeqn{eqn:cosserat1} or \refeqn{eqn:cosserat1b}.

\subsection{Infinitesimal Criterion of Invariance}
We apply the classical Lie-point symmetry method~\cite{Olivery10} to the quasi-linear first-order PDE system \refeqn{veq} in two independent variables $s,t$ and six dependent variables that are the components of the vectors $\vec{\omega}$ and $\vec{\kappa}$.

A transformation of \refeqn{veq}
\begin{equation}
\begin{array}{l}
s'=s'(s,t,\vec{\omega}(s,t),\vec{\kappa}(s,t)),\quad \vec{\omega}'=\vec{\omega}'(s,t,\vec{\omega}(s,t),\vec{\kappa}(s,t)), \\[0.1cm] t'=t'(s,t,\vec{\omega}(s,t),\vec{\kappa}(s,t)),\,
\quad \vec{\kappa}'=\vec{\kappa}'(s,t,\vec{\omega}(s,t),\vec{\kappa}(s,t)),
\end{array}
\label{group}
\end{equation}
that maps solutions to solutions is called Lie(-point) symmetry of \refeqn{veq}. The vector field
\begin{equation}
X:=\xi^{1}{\partial_s}+\xi^{2}{\partial_t}+\sum_{i=1}^3\left(\theta^{i}{\partial_{ \omega_i}}+\vartheta^{i}{\partial_{\kappa_i}}\right), \label{generator}
\end{equation}
is an infinitesimal generator of a one-parameter $(x\in \mathbb{R})$ Lie group of point transformations if its flow $\exp(x\,X)$  is a Lie(-point) symmetry. The coefficients $\xi^{1},\xi^{2},\theta^{i},\vartheta^{j}$ $(i,j=1,2,3)$ in \refeqn{generator} are functions in independent and dependent variables, and below we shall use the vector notation $\vec{\theta}:=\{\theta^1,\theta^2,\theta^3\}$, $\vec{\vartheta}:=\{\vartheta^1,\vartheta^2,\vartheta^3\}$.

The equality
\begin{equation}
X^{(pr)}\vec{F}\mid_{\vec{F}=0}=0, \label{criterion}
\end{equation}
is the {\em infinitesimal criterion of invariance} of \refeqn{veq} under a one-parameter Lie group of point transformations. Here $X^{(pr)}$ stands for the prolonged infinitesimal symmetry generator that, in addition to those in \refeqn{generator}, contains extra terms caused by the presence of the first-order partial derivatives in \refeqn{veq}. These extra terms are easily computed taking certain derivatives of the coefficient functions in the generator. The subscript in \refeqn{criterion} indicates that equality $X^{(pr)}\vec{F}=0$ must hold under condition $\vec{F}=0$. Equality \refeqn{criterion} implies an overdetermined determining system of linear PDEs for the coefficient functions of generator \refeqn{generator}. Since the equations in \refeqn{veq} are quasi-linear and solved with respect to partial derivatives, generation of their determining equations is algorithmically straightforward.

\subsection{Solving Determining Equations}
To obtain the determining equations we use the Maple package {\sc Desolv}~\cite{CarminatiVu00} and its routine {\em gendef}. It outputs 42 first-order PDEs. Generally, a reliable and powerful algorithmic way to solve a system  determining equations is its  transformation to a canonical involutive form or to a Gr\"{o}bner basis form and then solving such canonical system (cf.~\cite{Hereman96}). The package {\sc Desolv} has the built-in routine {\em icde} that implements the Standard Form algorithm~\cite{Reid91} for completion to involution. We prefer, however, completion to a Janet basis, the canonical involutive form based on Janet division (cf.~\cite{Seiler10} for theory of involution for algebraic and differential systems and references therein) and respectively, the Maple package {\sc Janet}~\cite{BCGPR03-2} that computes a Janet basis for a differential ideal generated by linear differential polynomials. We have two arguments in favor of this preference: (i) according to our benchmarking, package {\sc Janet} is substantially faster than the routine {\em icde} in {\sc Desolv}; (ii) given a linear system of PDEs in the Janet involutive form and a set of its analytic solutions one can algorithmically check whether this set contains all analytic solutions (cf.~\cite{Langer-Hegermann2014,Seiler10}).

The involutive form of determining equations for \refeqn{criterion} computed by package {\sc Janet} contains 86 linear PDEs.
To solve these equations we applied the routine {\em pdsolv} of the package {\sc Desolv} that exploits a number of heuristic algorithms for integration of linear PDEs including some advanced algorithms~\cite{Wolf00} oriented to integration of overdetermined systems of polynomially-nonlinear differential equations. It outputs a solution that depends on five arbitrary functions in the independent variables $s,t$. Two of these functions are unnecessary for us and can be omitted since they appear in the solution as shifts in the independent variables in accordance to the fact that the equations in \refeqn{veq} are autonomous. Taking this into account, the obtained solution can be presented as
\begin{equation}
\xi^1=\xi^2=0,\quad \vec{\theta}=\boldsymbol{\hat{A}}\vec{\omega}+{\partial_t \vec{p}},\quad \vec{\vartheta}=\boldsymbol{\hat{A}}\vec{\kappa}+{\partial_s \vec{p}}, \label{solution}
\end{equation}
where
\begin{equation}
\boldsymbol{\hat{A}}(s,t)=\begin{bmatrix} \phantom{-}0 & -c(s,t) & \phantom{-}b(s,t) \\ \phantom{-}c(s,t) & \phantom{-}0 & -a(s,t) \\ -b(s,t) & \phantom{-}a(s,t) & \phantom{-}0 \end{bmatrix},\quad
\vec{p}(s,t):=\begin{bmatrix} a(s,t) \\ b(s,t) \\ c(s,t) \end{bmatrix}, \label{matrix}
\end{equation}
and $a(s,t),b(s,t),c(s,t)$ are arbitrary smooth functions. Hereafter, we shall assume analyticity of these functions.

\subsection{Lie Group of Point Symmetry Transformations}
The routine {\em pdesolv} of {\sc Desolv} may not always find all solutions of the input differential system. For our purpose, however, the arbitrariness given by \refeqn{solution} and \refeqn{matrix} is sufficient for the construction of a general solution to \refeqn{veq}. It is essential  that the routine outputs a solution with maximally possible arbitrariness in the number of arbitrary functions depending on the both independent variables $(s,t)$. This fact can be detected from the structure of differential dimensional polynomial \cite{Langer-Hegermann2014}. It is easily computed by the routine {\em DifferentialSystemDimensionPolynomial} of the Maple package {\sc DifferentialThomas}~\cite{BGLHR12} that takes the Janet basis form of the determining system as an input. In the case being considered the differential dimensional polynomial is given by
\[
\frac{5}{2}l^2+\frac{21}{2}l+11=5\binom{l+2}{l}+3\binom{l+1}{l}+3.
\]
The first term of this expression shows that the general analytic solution contains 5 arbitrary functions in 2 variables. If one applies the built-in Maple command {\em pdsolve}, then its output solution set also contains 5 arbitrary functions in two variables. However, this solution depends nonlocally (via some integrals) on the arbitrary functions, and is rather cumbersome.

Having obtained the structure \refeqn{solution} and \refeqn{matrix} of the infinitesimal symmetry generator \refeqn{generator}, we compute now the one-parameter Lie-point symmetry group of transformations \refeqn{group} it generates. The symmetry group is given as solution (Lie's first fundamental theorem~\cite{Olivery10}) of the following system of two trivially solvable scalar ordinary  differential equations
\begin{equation*}
\begin{array}{l}
\mathrm{d}_xs'=0,\quad s'(0)=s\quad \Longrightarrow \quad s'=s,\\[0.1cm]
\mathrm{d}_xt'=0,\,\quad t'(0)=t\,\quad \Longrightarrow \quad t'=t,
\end{array}
\end{equation*}
and two vector ones
\begin{align}
&\mathrm{d}_x\vec{\omega}'=\boldsymbol{\hat{A}}\vec{\omega}'+{\partial_t \vec{p}}\,,\quad \vec{\omega}'(0)=\vec{\omega}, \label{Lie_omega} \\[0.1cm]
&\mathrm{d}_x\vec{\kappa}'=\boldsymbol{\hat{A}}\vec{\kappa}'+{\partial_s \vec{p}}\,,\,\quad \vec{\kappa}'(0)=\vec{\kappa}. \label{Lie_kappa}
\end{align}
Solution to \refeqn{Lie_omega} and \refeqn{Lie_kappa} is computable with the Maple command {\em dsolve}. But the output of this command is awkward and it is not easy to obtain in compact form. Such compact form solution can be obtained by hand computation as follows.

It is not difficult to see (cf.~\cite{Bellman97}), that \refeqn{Lie_omega} is satisfied by the expression
\begin{equation}
\vec{\omega}'=\exp(x\boldsymbol{\hat{A}})\vec{\omega}+\exp(x\boldsymbol{\hat{A}})\int_0^x\exp(-y\boldsymbol{\hat{A}})\,dy\,\,{\partial_t \vec{p}}\,. \label{group_omega}
\end{equation}
Since this system satisfies the conditions of the classical existence and uniqueness theorem for systems of ordinary differential equations~\cite{Pontryagin62},  it follows that \refeqn{group_omega} is the solution to \refeqn{Lie_omega}.

Furthermore, application of the Cayley-Hamilton theorem~\cite{AtiyahMacdonald69} to the matrix $\boldsymbol{\hat{A}}$ in \refeqn{matrix} gives
\begin{equation}
\boldsymbol{\hat{A}}^3=-p^2\boldsymbol{\hat{A}},\qquad p:=|\vec{p}|=\sqrt{a^2(s,t)+b^2(s,t)+c^2(s,t)}. \label{CH}
\end{equation}

By means of relation \refeqn{CH} expression \refeqn{group_omega} is transformed to
\begin{multline}
\vec{\omega}' = \left(\boldsymbol{\hat{I}}+\frac {\sin \left( p\, x
 \right)\boldsymbol{\hat{A}}}{p} + {\frac {\left( 1-\cos \left( p\, x \right)
 \right) \boldsymbol{\hat{A}}^{2}}{{p}^{2}}} \right)\vec{\omega} + \\ + \left({\frac {p\, x\boldsymbol{\hat{A}}^{2}+p^3\, x\boldsymbol{\hat{I}}
 -\sin \left( p\, x \right)\boldsymbol{\hat{A}}^{2} - \cos \left( p\,x \right)p\boldsymbol{\boldsymbol{\hat{A}}} + p\boldsymbol{\hat{A}}
 }{{p}^{3}}}\right) {\partial_t \vec{p}}\,, \label{matsol}
\end{multline}
where $\boldsymbol{\hat{I}}$ is the $3\times 3$ identity matrix. Formulae \refeqn{matrix} and \refeqn{matsol} show that without loss of generality the arbitrary vector $\vec{p}$ and matrix $\boldsymbol{\hat{A}}$ for $x\neq 0$ can be rescaled to absorb the group parameter $x$. It is equivalent to putting $x:=1$. In so doing, transformation \refeqn{matsol} can be rewritten in terms of arbitrary vector-function $\vec{p}$ as follows,
\begin{multline}
\vec{\omega}' = \vec{\omega} - \frac{\sin(p)}{p}\,\vec{p}\times \vec{\omega} + \frac{1-\cos(p)}{p^2}\,\left(\vec{p}\,(\vec{p}\, \vec{\omega})-p^2\,\vec{\omega}\right)+{\partial_t \vec{p}}+\\
  +\frac{p-\sin(p)}{p^3}\,\left(\vec{p}\,\left(\vec{p}\, {\partial_t \vec{p}}\right)-p^2\,{\partial_t \vec{p}}\right)-\frac{1-\cos(p)}{p^2}\,\vec{p}\times {\partial_t \vec{p}}\,. \label{vecsol_omega}
\end{multline}

Respectively, solution to \refeqn{Lie_kappa} reads
\begin{multline}
\vec{\kappa}' = \vec{\kappa} - \frac{\sin(p)}{p}\,\vec{p}\times \vec{\kappa} + \frac{1-\cos(p)}{p^2}\,\left(\vec{p}\,(\vec{p}\, \vec{\kappa})-p^2\,\vec{\kappa}\right)+{\partial_s \vec{p}}+\\
  +\frac{p-\sin(p)}{p^3}\,\left(\vec{p}\,\left(\vec{p}\, {\partial_s \vec{p}}\right)-p^2\,{\partial_s \vec{p}}\right)-\frac{1-\cos(p)}{p^2}\,\vec{p}\times {\partial_s \vec{p}}\,. \label{vecsol_kappa}
\end{multline}

Thus, transformations \refeqn{vecsol_omega} and \refeqn{vecsol_kappa} are Lie-point symmetries of \refeqn{veq} for an arbitrary vector $\vec{p}$ in \refeqn{matrix}. We also verified directly with Maple that if $\vec{\omega},\vec{\kappa}$ are solutions to \refeqn{veq}, then $\vec{\omega}',\vec{\kappa}'$ are also solutions for any vector-function $\vec{p}$.

\subsection{General Solution}
Consider now the special case of transformations \refeqn{vecsol_omega} and \refeqn{vecsol_kappa} when
\begin{equation}
\vec{\omega}:=\vec{f}(t),\quad \vec{f}(t)=\{f_1(t),f_2(t),f_3(t)\},\quad \vec{\kappa}:=0, \label{init_data}
\end{equation}
where $f_1(t),f_2(t),f_3(t)$ are arbitrary analytic functions, and denote the image of \refeqn{init_data} under the transformations by
$\vec{\omega}$ and $\vec{\kappa}$:
\begin{equation}\label{gen_sol}
\begin{split}
\vec{\omega} =& \vec{f}(t) - \frac{\sin(p)}{p}\,\vec{p}\times \vec{f}(t) + \frac{1-\cos(p)}{p^2}\,\left(\vec{p}\,(\vec{p}\, \vec{f}(t))-p^2\,\vec{f}(t)\right)+{\partial_t \vec{p}}+\\
  & +\frac{p-\sin(p)}{p^3}\,\left(\vec{p}\,\left(\vec{p}\, {\partial_t \vec{p}}\right)-p^2\,{\partial_t \vec{p}}\right)-\frac{1-\cos(p)}{p^2}\,\vec{p}\times {\partial_t \vec{p}}\,,\\
\vec{\kappa} = & {\partial_s \vec{p}}
  +\frac{p-\sin(p)}{p^3}\,\left(\vec{p}\,\left(\vec{p}\,{\partial_s \vec{p}}\right)-p^2\,{\partial_s \vec{p}}\right)-\frac{1-\cos(p)}{p^2}\,\vec{p}\times {\partial_s \vec{p}}\,.
\end{split}
\end{equation}

Now we can formulate and prove the main theoretical result of the present paper.
\begin{proposition}
The vector-functions $\vec{\omega}$ and $\vec{\kappa}$ expressed by formulae \refeqn{gen_sol} in terms of the matrix- and vector-functions \refeqn{matrix}, \refeqn{init_data} whose components are arbitrary analytic functions provide a solution to equations \refeqn{veq} that is general.
\end{proposition}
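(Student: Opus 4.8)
\medskip
\noindent\textbf{Proof strategy.}

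The plan is to separate the two assertions---that \refeqn{gen_sol} solves \refeqn{veq}, and that this solution is \emph{general}---and to handle generality through the moving-frame reading of \refeqn{veq}. The first assertion requires essentially nothing new: the pair \refeqn{init_data}, $\vec{\omega}=\vec{f}(t)$, $\vec{\kappa}=\vec{0}$, is visibly a solution of \refeqn{veq}, and \refeqn{gen_sol} is precisely its image under the transformations \refeqn{vecsol_omega}, \refeqn{vecsol_kappa}; since those were already shown to carry solutions of \refeqn{veq} to solutions for every analytic $\vec{p}$, the functions \refeqn{gen_sol} solve \refeqn{veq}.

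For generality I would use that, by the derivation recalled above, \refeqn{veq} is exactly the compatibility condition $\partial_t\partial_s\vec{d}_k=\partial_s\partial_t\vec{d}_k$ for an orthonormal frame $\{\vec{d}_k(s,t)\}$ obeying $\partial_s\vec{d}_k=\vec{\kappa}\times\vec{d}_k$ and $\partial_t\vec{d}_k=\vec{\omega}\times\vec{d}_k$. Thus, starting from an \emph{arbitrary} analytic solution $(\vec{\omega},\vec{\kappa})$ of \refeqn{veq}, these two first-order systems for the frame $Q(s,t)\in SO(3)$ (columns $\vec{d}_k$) are Frobenius-compatible, and the analytic Frobenius theorem---equivalently Cauchy--Kovalevskaya---yields a unique analytic $Q(s,t)$ once a frame is fixed at one point. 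I would then read off $\vec{p}$ and $\vec{f}$ from $Q$: put $Q_0(t):=Q(s_0,t)$ and let $G(s,t):=Q(s,t)\,Q_0(t)^{\top}$, so that $G(s_0,\cdot)=\boldsymbol{\hat{I}}$. The seed \refeqn{init_data} is exactly the solution whose frame is the $s$-independent $Q_0(t)$---indeed $\vec{\kappa}=\vec{0}\Leftrightarrow\partial_sQ_0=0$, and $\vec{f}$ is its angular velocity, $[\vec{f}]_{\times}=(\partial_tQ_0)Q_0^{\top}$---so $\vec{f}(t)$ is recovered and is automatically a function of $t$ alone. Near $s_0$ the rotation $G$ is close to $\boldsymbol{\hat{I}}$ and hence admits analytic exponential coordinates, which recover $\vec{p}(s,t)$ with $\vec{p}(s_0,\cdot)=\vec{0}$ (valid wherever the rotation angle $p$ stays below $\pi$).

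The decisive step is to recognize \refeqn{vecsol_omega}, \refeqn{vecsol_kappa} as the frame (gauge) map $Q_0\mapsto GQ_0=Q$ with $G=\exp(-\boldsymbol{\hat{A}})$. Their homogeneous parts are the rotation $\exp(-\boldsymbol{\hat{A}})$ (Rodrigues' formula together with \refeqn{CH}, using $\boldsymbol{\hat{A}}\vec{v}=\vec{p}\times\vec{v}$), while the inhomogeneous coefficient $\exp(\boldsymbol{\hat{A}})\int_0^1\exp(-y\boldsymbol{\hat{A}})\,dy$ of \refeqn{group_omega} should equal the operator carrying $\partial_s\vec{p}$ (resp.\ $\partial_t\vec{p}$) to the Maurer--Cartan vector of $(\partial_sG)G^{\top}$ (resp.\ $(\partial_tG)G^{\top}$). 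Granting this identification, applying \refeqn{vecsol_omega}, \refeqn{vecsol_kappa} with the reconstructed $\vec{p}$ to the seed of angular velocity $\vec{f}$ returns the frame $Q$, hence the original arbitrary solution $(\vec{\omega},\vec{\kappa})$; as \refeqn{gen_sol} is this very construction, every analytic solution has the form \refeqn{gen_sol}. I expect this Maurer--Cartan identification---matching the closed-form coefficient matrices of \refeqn{vecsol_omega}, \refeqn{vecsol_kappa} to the logarithmic derivative of $\exp(\pm\boldsymbol{\hat{A}})$---to be the main technical obstacle; the natural way to discharge it is the direct computation via \refeqn{CH}.

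As an alternative that sidesteps the explicit gauge identification, one can argue by substitution: solve $\vec{\kappa}=\exp(\boldsymbol{\hat{A}})\int_0^1\exp(-y\boldsymbol{\hat{A}})\,dy\,\partial_s\vec{p}$ for $\vec{p}$---well posed, since the coefficient matrix is $\boldsymbol{\hat{I}}$ at $\vec{p}=\vec{0}$---by Cauchy--Kovalevskaya in $s$ with $\vec{p}|_{s_0}=\vec{0}$, then \emph{define} $\vec{f}$ from the $\vec{\omega}$-equation of \refeqn{gen_sol} and prove $\partial_s\vec{f}=\vec{0}$ using that $(\vec{\omega},\vec{\kappa})$ satisfies \refeqn{veq}; here the $s$-independence of $\vec{f}$ is the crux. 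Either route closes the argument, and a useful consistency check is that the arbitrariness of \refeqn{gen_sol}---three functions $\vec{p}$ of $(s,t)$ entering through their first derivatives, together with three functions $\vec{f}$ of $t$---matches the free Cauchy data of \refeqn{veq}, namely $\vec{\omega}$ free and $\vec{\kappa}|_{t=0}$ free.
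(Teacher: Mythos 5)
Your handling of the first assertion coincides with the paper's: \refeqn{init_data} is trivially a solution and \refeqn{gen_sol} is its image under \refeqn{vecsol_omega}, \refeqn{vecsol_kappa}. For generality, your main (frame/gauge) route is genuinely different from the paper's, which never reconstructs a frame: given an arbitrary analytic solution $(\vec{\omega}^*,\vec{\kappa}^*)$ of \refeqn{veq}, the paper (i) reads the $\vec{\kappa}$-equation of \refeqn{gen_sol} as an equation for $\vec{p}$, in which $\partial_s a,\partial_s b,\partial_s c$ enter linearly with Jacobian determinant $2(\cos p-1)/p^2$, solves for $\partial_s\vec{p}$ and invokes Cauchy--Kovalevskaya to produce an analytic $\vec{p}$ realizing $\vec{\kappa}=\vec{\kappa}^*$; (ii) picks $\vec{f}(t)$ to match $\vec{\omega}^*(0,t)$, using that the part of \refeqn{gen_sol} linear in $\vec{f}$ is a rotation, hence invertible; (iii) concludes $\vec{\omega}\equiv\vec{\omega}^*$ from uniqueness of analytic solutions of \refeqn{veq} in Cauchy--Kovalevskaya normal form (solved for $\partial_s\vec{\omega}$ with $\vec{\kappa}^*$ known). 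Step (iii) is exactly how to close what you call the ``crux'' of your alternative route: you never need $\partial_s\vec{f}=\vec{0}$; match data on $s=0$ and invoke uniqueness. Conversely, your route buys conceptual clarity --- \refeqn{veq} as a zero-curvature condition, \refeqn{gen_sol} as the gauge orbit of the flat seed, $\vec{f}$ automatically a function of $t$ alone --- where the paper's argument is purely computational.

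The step you ``grant'', however, is a genuine gap, and as you state it, it fails, because the paper's printed formulas are internally inconsistent and your dictionary conflates three different operators. The computation via \refeqn{CH} gives
\begin{equation*}
\exp(\boldsymbol{\hat{A}})\int_0^1\exp(-y\boldsymbol{\hat{A}})\,dy
=\boldsymbol{\hat{I}}+\frac{1-\cos p}{p^2}\,\boldsymbol{\hat{A}}+\frac{p-\sin p}{p^3}\,\boldsymbol{\hat{A}}^2=:\boldsymbol{\hat{L}},
\end{equation*}
and $\boldsymbol{\hat{L}}\,\partial_t\vec{p}$ is the vector of $\bigl(\partial_t\exp(\boldsymbol{\hat{A}})\bigr)\exp(-\boldsymbol{\hat{A}})$; hence the flow \refeqn{matsol} at $x=1$, which couples $\boldsymbol{\hat{L}}$ to the homogeneous rotation $\exp(+\boldsymbol{\hat{A}})$, is precisely the gauge map $Q\mapsto\exp(\boldsymbol{\hat{A}})Q$. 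The printed \refeqn{vecsol_omega}, \refeqn{vecsol_kappa}, \refeqn{gen_sol} instead couple that inhomogeneous operator (with $\boldsymbol{\hat{A}}$ replaced by $-\boldsymbol{\hat{A}}$) to $\exp(-\boldsymbol{\hat{A}})$, and no relabeling $\vec{p}\mapsto-\vec{p}$ reconciles the two, since it would flip the $\partial\vec{p}$ terms as well; the Maurer--Cartan form of $\exp(-\boldsymbol{\hat{A}})$ carries the opposite sign to what is printed. The mixed-sign map is in fact not a symmetry: its linearization $\vec{\theta}=-\boldsymbol{\hat{A}}\vec{\omega}+\partial_t\vec{p}$, $\vec{\vartheta}=-\boldsymbol{\hat{A}}\vec{\kappa}+\partial_s\vec{p}$ leaves a nonzero residue in \refeqn{criterion}, and concretely, taking $\vec{f}=(0,1,0)$, $\vec{p}=(s,0,0)$ in \refeqn{gen_sol} as printed gives $\vec{\omega}=(0,\cos s,-\sin s)$, $\vec{\kappa}=(1,0,0)$, for which $\partial_s\vec{\omega}-\partial_t\vec{\kappa}+\vec{\omega}\times\vec{\kappa}=-2\,(0,\sin s,\cos s)\neq\vec{0}$, whereas the sign-corrected $\vec{\omega}=(0,\cos s,\sin s)$ does solve \refeqn{veq}. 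So your identification must be built against \refeqn{matsol} (equivalently, with the two cross-product terms in \refeqn{vecsol_omega}, \refeqn{vecsol_kappa}, \refeqn{gen_sol} sign-flipped); with that correction both your gauge argument and your substitution argument close, and your route has the merit of detecting this slip --- to which the paper's own proof steps (the determinant, the invertibility of the rotation, and the uniqueness argument) are entirely insensitive.
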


\begin{proof}
Obviously, \refeqn{init_data} is a solution. It follows that expressions \refeqn{gen_sol} make up a solution. We show first that one can choose arbitrary functions $a(s,t)$, $b(s,t)$, and $c(s,t)$ to obtain any vector-function $\vec{\kappa}$ as the left-hand side of the second equality in \refeqn{gen_sol}. The partial derivatives $\partial_s a$, $\partial_s b$, $\partial_s c$ of $a,b,c$ with respect to $s$ appear linearly in \refeqn{gen_sol}. Direct computation with Maple of the Jacobian matrix $J_{\vec{\kappa}}(\partial_s a,\partial_s b, \partial_s c)$ and its determinant gives the following compact expression:
\begin{equation*}
   \det\left(J_{\vec{\kappa}}(\partial_s a,\partial_s b, \partial_s
   c)\right)=\frac
   {2(\cos(p)-1)}{p^2}\,.
\end{equation*}

It is clear that one can always choose the {\em initial values} $a(0,t)$, $b(0,t)$, and $c(0,t)$ of the arbitrary functions to provide nonzero values of the Jacobian determinant, and hence to solve locally the last vector equality in \refeqn{gen_sol} with respect to $\partial_s a$, $\partial_s b$, $\partial_s c$. Thereby, the equation can be brought into a first-order differential system solved with respect to the partial derivative and with analytic right-hand sides. Then, by the classical Cauchy-Kovalevskaya theorem (cf.~\cite{Seiler10}) the initial value (Cauchy) problem for the solved system has a unique analytic solution.

We now turn our attention to the first equation in \refeqn{gen_sol} and show that having decided upon $a(s,t)$, $b(s,t)$, and $c(s,t)$, one can choose functions $f_1(t)$, $f_2(t)$, and $f_3(t)$ to obtain arbitrary analytic $\omega_1(0,t)$, $\omega_2(0,t)$, and $\omega_3(0,t)$. The part of \refeqn{gen_sol} linear in $\vec{f}(t)$ is identically equal to $\exp({\boldsymbol{\hat{A}}})\vec{f}(t)$ (cf.~\refeqn{group_omega}). Since the matrix $\boldsymbol{\hat{A}}$ given by \refeqn{matrix} is skew-symmetric, $\det(\exp({\boldsymbol{\hat{A}}}))=1$ and \refeqn{gen_sol} is solvable with respect to the analytic vector-function $\vec{f}(t)$. It implies that the system \refeqn{veq}, being in {\em normal} or {\em Cauchy-Kovalevskaya form}, admits unique analytic solutions for $\omega_1(s,t)$, $\omega_2(s,t)$, and $\omega_3(s,t)$. $\Box$
\end{proof}


\section{Conclusion and Future Work}

In this contribution, we have studied a subsystem of the Special Cosserat Theory of Rods by performing a Lie group analysis. To be more specific, by applying modern computer algebra methods, algorithms, and software we have constructed an explicit form of solution to \refeqn{veq} that depends on three arbitrary functions in $(s,t)$ and three arbitrary functions in $t$. Assuming analyticity of the arbitrary functions in a domain under consideration, we have proved that the obtained solution is analytic and general. This is a step towards generating detailed knowledge about the structure of the PDE system that governs the spatiotemporal evolution of the Cosserat rod.

In our future work these results will be used to develop algorithms based on combinations of numerical and analytical treatments of the governing equations to overcome the typical problems resulting from the system's stiffness. This approach would allow for larger step sizes compared to pure numerical solvers and at the same time combines efficiency and accuracy without sacrificing one for another.

\section*{Acknowledgements}

The contribution of the third author (V.P.G.) was partially supported by the grant 13-01-00668 from the Russian Foundation for Basic Research. We thank Markus Lange--Hegermann and Paul Mueller for useful remarks. The authors are grateful to the CASC 2014 reviewers' valuable comments that improved the
manuscript.



\end{document}